\documentclass[11pt,a4paper]{article}
\usepackage{geometry} 
\geometry{a4paper}

\usepackage{amsthm,amsmath,amsfonts,amscd,latexsym,amssymb,amsbsy,dsfont,mathrsfs,tabls}

\newtheorem{theorem}{Theorem}[section]
\newtheorem{lemma}[theorem]{Lemma}

\newtheorem{corollary}[theorem]{Corollary}

\newtheorem{definition}[theorem]{Definition}

\newtheorem{remark}[theorem]{Remark}


\newcommand{\C}{\mbb{C}}
\newcommand{\HH}{\mbb{H}}

\newcommand{\I}{\mscr{I}}
\newcommand{\II}{\mc{I}}

\newcommand{\R}{\mbb{R}}
\newcommand{\Q}{\mc{Q}}
\newcommand{\B}{\mc{B}}

\newcommand{\OO}{\Omega}


\newcommand{\mb}{\mathbf}
\newcommand{\mbb}{\mathbb}
\newcommand{\mc}{\mathcal}

\newcommand{\mr}{\mathrm}

\newcommand{\mscr}{\mathscr}
\newcommand{\lra}{\longrightarrow}
\newcommand{\pr}{\prime}

\newcommand{\ui}{i}

\def\SS{\mathbb S}
\newcommand\Ac{A_{\C}}

\newcommand\Sl{\mathcal S}

\newcommand\dd[2]{\dfrac{\partial#1}{\partial#2}}

\newcommand{\cS}{\mbb{S}}

\newcommand{\wrt}{w.r.t.\ }
\newcommand{\n}{\mr{n}}
\newcommand{\nn}{\mb{n}}

\newcommand{\im}{\mr{Im}}
\newcommand{\dck}{\Gamma_S}
\newcommand{\ck}{C_S}

\newcommand{\be}{\begin{equation}}
\newcommand{\ee}{\end{equation}}



\begin{document}

\title{{\itshape Volume Cauchy formulas for slice functions\\ on real associative $^*$-algebras}}

\author{R.\ Ghiloni, A.\ Perotti
\thanks{Work partially supported by GNSAGA of INdAM}
\\Department of Mathematics\\
  University of Trento\\ Via Sommarive, 14\\ I--38123 Povo Trento ITALY\\
ghiloni@science.unitn.it,   perotti@science.unitn.it}

\date{}

\maketitle

\begin{abstract}
We introduce a family of Cauchy integral formulas for slice and slice regular functions on a real associative *-algebra. For every  suitable choice of a real subspace of the algebra, a different formula is given, in which the domains of integration are subsets of the subspace. In particular, in the quaternionic case we get a volume Cauchy formula. In the Clifford algebra case, the choice of the paravector subspace $\R^{n+1}$ gives a volume Cauchy formula for slice monogenic functions.
\medskip

\noindent 
Keywords: Cauchy integral formula; slice regular function; quaternions;  Clifford algebras

\noindent
AMS Subject Classification: Primary 30G35; Secondary 32A30, 30E20, 13J30
\end{abstract}


\section{Introduction}

The concept of slice regularity for functions of one quaternionic, octonionic or Clifford variable has been introduced recently by Gentili and Struppa \cite{GeSt2006CR,GeSt2007Adv} and by Colombo, Sabadini and Struppa \cite{CoSaSt2009Israel}.
In \cite{GhPe_AIM} and \cite{GhPe_Trends}, a new approach to slice functions, based  on the concept of \emph{stem function}, allowed to extend further the theory to any real alternative $^*$-algebra of finite dimension. 
In this setting, a Cauchy integral formula for slice functions of class $\mscr{C}^1$ was proved. In the  quaternionic case, a Cauchy kernel was already introduced in \cite{CoGeSaPreprint2008}, and for slice monogenic functions in \cite{CoSaJMAA2011,CoSaTM2009}.  This kernel was applied in \cite{CoSaStMMJ2011} to get Cauchy formulas for $\mscr{C}^1$-functions on a class of domains intersecting the real axis. 

In the present work, we introduce a family of Cauchy integral formulas for slice and slice regular functions, in which  the domains of integration depend on the choice of a suitable real vector subspace of the algebra. In the quaternionic case, taking as subspace the whole space $\HH$, we get a \emph{volume Cauchy formula}. In the Clifford algebra case, we can choose the paravector subspace $\R^{n+1}$, obtaining a Cauchy formula in which integration is performed on an open subset of $\R^{n+1}$ and on its boundary.

We begin by fixing some assumptions and by recalling some basic notions. 

\textit{Fix a real associative algebra $A$  with unity $1$ of finite dimension $d>0$, equipped with the $\mscr{C}^{\infty}$-manifold structure as a real vector space and with an anti-involution $x \longmapsto x^c$. Define $e:=d-1$}. Identify $\R$ with the subalgebra of $A$ generated by~$1$. 
The anti-involution $x \longmapsto x^c$ is a real linear map of $A$ into $A$ satisfying the following properties: $(x^c)^c=x$ for all $x \in A$, $(xy)^c=y^cx^c$ for all $x,y\in A$ and $x^c=x$ for each real $x$. We can then consider $A$ as a real $^*$-algebra.

For each element $x$ of $A$, the \emph{trace} of $x$ is $t(x):=x+x^c\in A$ and the (squared) \emph{norm} of $x$ is
$n(x):=xx^c\in A$. We recall some definitions from \cite{GhPe_AIM} and \cite{GhPe_Trends}.

\begin{definition}\label{cone}
The \emph{quadratic cone} of  $A$ is the set
\[\Q_A:=\R\cup\{x\in A\ |\ t(x)\in\R,\ n(x)\in\R,\ 4n(x)> t(x)^2\}.\]
We also set\,
$\SS_A:=\{J\in \Q_A\ |\ J^2=-1\}$. Elements of\, $\SS_A$ are called \emph{square roots of $-1$} in the algebra $A$. For each $J\in \SS_A$, we will denote by $\C_J:=\langle 1,J\rangle\simeq\C$ the subalgebra of $A$ generated by  $J$. 
\end{definition}
\textit{In what follows, we assume that $\cS_A \neq \emptyset$.} It follows that $d$ is even.

Let $\Ac=A\otimes_{\R}\C$ be the complexification of $A$. We will use the representation $\Ac=\{w=x+i y \ | \ x,y\in A\}$, with  $i^2=-1$ and complex conjugation $\overline w=\overline{x+iy}=x-iy$.

Let $D$ be a non-empty subset of $\C$, invariant under the complex conjugation $z=\alpha+\ui \beta \longmapsto \overline{z}=\alpha-\ui \beta$. A function $F:D \lra \Ac$ is called a \textit{stem function} on $D$ if it satisfies the condition $F(\overline z)=\overline{F(z)}$ for each $z \in D$. If $F_1,F_2:D \lra A$ are the $A$-valued components of $F=F_1+iF_2$, then such a condition is equivalent to require that $F_1(\overline z)=F_1(z)$ and $F_2(\overline z)=-F_2(z)$ for each $z \in D$. We call $F$ \textit{continuous} if $F_1$ and $F_2$ are continuous. We say that $F$ is \textit{of class $\mscr{C}^1$} if $F_1$ and $F_2$ can be extended on an open neighborhood $U$ of $D$ in $\C$ to functions $\tilde{F}_1,\tilde{F}_2$ of class $\mscr{C}^1$ in the usual sense. The reader observes that one can also suppose that $U$ is invariant under the complex conjugation of $\C$ and $\tilde{F}_1+i\tilde{F}_2$ is a stem function.

Let $\OO_D$ be the  subset of $\Q_A$ defined by:
\[
\OO_D:=\{x \in \Q_A \,|\, x=\alpha+\beta J, \ \alpha,\beta \in \R, \ \alpha+i\beta \in D, \ J \in \SS_A\}.
\]
We set $D_J:=\OO_D\cap\C_J$ and denote by $\partial D_J$ the (relative topological) boundary of $D_J$ in $\C_J$. Observe that, if $D$ is open, then $\OO_D$ is relatively open in $\Q_A$ and the boundary $\partial \OO_D$ of $\OO_D$ in $\Q_A$ coincides with $\bigcup_{J \in \SS_A}\partial D_J$. 

\begin{definition}
Any stem function $F=F_1+iF_2:D \lra \Ac$ induces a \emph{$($left$)$ slice function} $f=\II(F):\OO_D \lra A$ as follows: if $x=\alpha+\beta J\in D_J$ for some $\alpha,\beta \in \R$ and $J \in \SS_A$, we set  
\[
f(x):=F_1(z)+JF_2(z) \quad (z=\alpha+i\beta).
\]
\end{definition}

We will denote by $\Sl^0(\OO_D,A)$ the real vector space of (left) slice functions on $\OO_D$ induced by continuous stem functions and  by
$\Sl^1(\OO_D,A)$ 
the real vector space of slice functions induced by stem fun\-ctions of class $\mscr{C}^1$. Proposition 7 of \cite{GhPe_AIM} ensures that $\mc{S}^h(\OO_D,A) \subset \mscr{C}^0(\OO_D,A)$ ($h=0,1$).

Suppose that $D$ is a non--empty open subset of $\C$. Let $F:D \lra \Ac$ be a stem function of class $\mscr{C}^1$ and let $f=\II(F) \in \mc{S}^1(\OO_D,A)$. Let us denote by $\partial F/\partial \overline{z}:D \lra \Ac$ the stem function on $D$ defined by
\[
\dd{F}{\overline{z}}:=\frac{1}{2}\left(\dd{F}{\alpha}+\ui \dd{F}{\beta} \right),
\]
which induces the slice derivative $\dd{f}{x^c}:=\II\left(\dd{F}{\overline{z}}\right)\in \mc{S}^0(\OO_D,A)$.

\begin{definition}
A slice function $f\in\mc{S}^1(\OO_D,A)$ is called \emph{slice regular} if it holds:
\[
\dd{f}{x^c}=0 \text{\quad on }\OO_D.
\]
We denote by $\mc{SR}(\OO_D,A)$ the real vector space of all slice regular functions on~$\OO_D$.
\end{definition}


Let $S$ be a non--empty subset of $A$. We say that $S$ is a \emph{genuine imaginary sphere of $A$}, for short a \emph{gis of $A$}, if there exists a (real) vector subspace $M$ of $A$ such that $\R \subset M \subset \Q_A$ and $S=M \cap \cS_A$. If such a $M$ exists, then it is unique. In fact, it is easy to verify that $M=\bigcup_{J \in S}\C_J$. For this reason, if $S$ is a gis of $A$, then we say that $M$ is the \textit{vector subspace of $A$ inducing $S$}. The reader observes that, since $M$ contains $1$ and at least one element of $\cS_A$, its dimension is at least 2.  Moreover, the set $\{J,-J\}$ is a gis of $A$ for each $J \in \cS_A$.

\begin{lemma} \label{lem:gis}
Let $S$ be a gis of $A$ and let $M$ be the vector subspace of $A$ inducing $S$. Then there exists a norm $\|\ \|$ on $A$ such that $\|x\|^2=n(x)$ for each $x \in M$.
\end{lemma}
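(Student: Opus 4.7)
The plan is to show that the norm form $n$ restricted to $M$ is already a positive definite quadratic form, and then extend it to a positive definite quadratic form on all of $A$ by a trivial direct-sum construction. The desired norm on $A$ is then the square root of this extended form.

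First, for $x \in M \subset \Q_A$ we have $n(x) \in \R$ by the definition of the quadratic cone. To see that $n|_M$ is a quadratic form, I would compute
\[
n(x+y) - n(x) - n(y) = xy^c + yx^c = t(xy^c),
\]
which lies in $\R$ whenever $x,y \in M$, since then $n(x)$, $n(y)$, and $n(x+y)$ are all real. The associated symmetric $\R$-bilinear form is $b(x,y) := \frac{1}{2}t(xy^c)$; symmetry is immediate from $(xy^c)^c = yx^c$, and $\R$-bilinearity follows from the additivity of $t$ and of the map $x \mapsto xy^c$.

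For positive definiteness I would split into cases. If $x \in \R \setminus\{0\}$, then $x^c = x$ and $n(x) = x^2 > 0$. If instead $x \in M \setminus \R$, then $x \in \Q_A \setminus \R$, and the strict inequality $4n(x) > t(x)^2 \geq 0$ in the definition of $\Q_A$ forces $n(x) > 0$. Hence $n|_M$ is a positive definite quadratic form on $M$.

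To conclude, I would pick any vector-space complement $N$ of $M$ in $A$ together with an arbitrary inner product on $N$, and extend $n|_M$ to an inner product on $A$ by declaring $M$ and $N$ orthogonal. The square root of the associated quadratic form is then a norm $\|\,\cdot\,\|$ on $A$ with $\|x\|^2 = n(x)$ for every $x \in M$, as required. The argument is essentially routine; the only conceptual point — and the mildest obstacle — is recognising that $n|_M$, a priori only a scalar-valued function, is genuinely a positive definite quadratic form, which is exactly what the hypothesis $M \subset \Q_A$ delivers via the two pieces of the cone definition.
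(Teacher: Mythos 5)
Your proof is correct, but it takes a genuinely different route from the paper's. The paper proves the triangle inequality for $\sqrt{n}$ on $M$ by direct computation: writing $x=\alpha+\beta I$ and $y=\alpha'+\beta' J$ with $I,J\in S$, it deduces $t(IJ)\ge -2$ from $n(I-J)\ge 0$, bounds $t(xy^c)\le 2(\alpha\alpha'+\beta\beta')$, and concludes with the Cauchy--Schwarz inequality in $\R^2$, finally extending the resulting norm from $M$ to $A$. You instead observe that the polarization $\frac12\,t(xy^c)=\frac12\big(n(x+y)-n(x)-n(y)\big)$ is real-valued on $M$ (because $M$ is a subspace contained in $\Q_A$, where $n$ is real), symmetric, $\R$-bilinear, and positive definite by the two clauses of the cone definition, so that $n|_M$ is the quadratic form of an inner product and the triangle inequality comes for free from the abstract Cauchy--Schwarz inequality. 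Both arguments rest on the same identity $n(x+y)-n(x)-n(y)=t(xy^c)$ and the same use of $4n(x)>t(x)^2$ for positivity, but yours is more conceptual: the paper's computation in effect verifies Cauchy--Schwarz by hand in this particular setting. Your version also has the side benefit of producing directly the scalar product $(x,y)=\frac12\,t(xy^c)$ on $M$ and an orthogonal extension of it to $A$, which is precisely the structure the paper invokes at the start of Section 2 when it chooses a basis orthonormal with respect to ``the scalar product associated to the norm''; the paper's own extension step only asserts the existence of an extended norm, so your construction is, if anything, slightly better adapted to what is actually used later.
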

\begin{proof}
Since $M \subset \Q_A$, the function $n(x)$ is real-valued and non-negative on $M$. We prove that the function $\sqrt{n(x)}$ is a norm on $M$. First of all, the function is positive-homogeneous and vanishes only at $x=0$. It remains to prove that it satisfies the triangle inequality:
\[
\sqrt{n(x+y)}\le\sqrt{n(x)}+\sqrt{n(y)}\quad\text{for each }x,y\in M.
\]
This is equivalent to 
\be \label{norm2}
n(x+y)-n(x)-n(y)\le 2\sqrt{n(x)n(y)}\;.
\ee
The left-hand side of \eqref{norm2} is $xy^c+yx^c=t(xy^c)$.
Let $x=\alpha+\beta I$, $y=\alpha'+\beta'J$ in $M$, with $I,J\in S\subset\cS_A$, $\alpha,\beta,\alpha',\beta'\in\R$ and $\beta,\beta' \geq 0$. Since $I- J\in M$, we have that $n(I-J) \in \R$ and
\[
0\le n(I-J)=-(I- J)^2=2+ t(IJ).
\]
It follows that $t(IJ)$ is real and $t(IJ)\ge-2$. Since $xy^c=(\alpha+\beta I)(\alpha'-\beta' J)$, the trace of $xy^c$ is equal to 
$2\alpha\alpha'-\beta\beta' t(IJ)$. Therefore, $t(xy^c)\le 2(\alpha\alpha'+\beta\beta')$, while $n(x)n(y)=(\alpha^2+\beta^2)(\alpha'^2+\beta'^2)\ge(\alpha\alpha'+\beta\beta')^2$. It follows that  $t(xy^c)\le2\sqrt{n(x)n(y)}$, which is precisely inequality \eqref{norm2}. 
Since the restriction of $\sqrt{n(x)}$ to $M$ is a norm, it can be extended to a norm $\|\ \|$ on $A$.
\end{proof}


\section{The volume Cauchy formulas}

\textit{Fix a gis $S$ of $A$. Denote by $M$ the vector subspace of $A$ inducing $S$ and by $m$ the dimension of $M$. Choose a norm $\|\ \|$ on $A$ as in Lemma \ref{lem:gis}}.

Let $\B=(v_0,v_1,\ldots,v_e)$ be a (real) vector basis  of $A$ with $v_0=1$, orthonormal w.r.t.\ the scalar product $(\ ,\,)$ on $A$ associated to the norm $\|\ \|$ and such that  $v=(v_0,v_1,\ldots,v_{m-1})$ form a orthonormal basis of $M$. Note that $M$ is the orthogonal direct sum of $\R$ and $M \cap \ker(t)$, since $(x,y)=\frac12 t(xy^c)$ on $M$. Let $L:\R^d \lra A$ be the real vector isomorphism sending $x=(x_0,x_1,\ldots,x_e)$ into $L(x)=\sum_{\ell=0}^ex_{\ell}v_{\ell}$. Identify $\R^d$ with $A$ via~$L$ and hence $M$ with $\R^m=\R^m \times \{0\} \subset \R^m \times \R^{d-m}=\R^d$. The product of $A$ becomes a product on $\R^d$ by requiring that $L$ is an isomorphism of $\R$-algebras. In other words, given $x,y \in \R^d$, $xy$ is defined as $L^{-1}(L(x)L(y))$. Since $\B$ is orthonormal, $\|x\|$ coincides with the usual euclidean norm $(\sum_{\ell=0}^ex_{\ell}^2)^{1/2}$ of $x$ in $\R^d$. By Proposition 1(6) of \cite{GhPe_AIM}, we know that $\cS_A=\{J \in A \, | \, t(J)=0, \ n(J)=1\}$. In this way, we have that
\[
\textstyle
S=\{(x_0,x_1,\ldots,x_{m-1}) \in \R^m \, | \, x_0=0, \, \sum_{\ell=1}^{m-1}x_{\ell}^2=1\}.
\]

\textit{In what follows, we assume that $D$ is a non--empty bounded open subset of $\C$ with boundary $\partial D$  of class $\mscr{C}^1$}. Denote by $\n:\partial D \lra \C$ the continuous function sending $z \in \partial D$ into the outer normal versor of $\partial D$ at $z$.  
Since $D$ is stable under conjugation, the map $\n$ is a 
 stem function on $\partial D$. 

We define the \emph{circularization $\OO_D(S)$ of $D$ \wrt $S$} as the following subset of $\Q_A$:
\[
\OO_D(S):=\{x \in \Q_A \, | \, x=\alpha+\beta J, \ \alpha,\beta \in \R, \ \alpha+i\beta \in D, \ J \in S\}.
\]
Since $\OO_D(\{J,-J\})=D_J$ for each $J \in \cS_A$, it follows that  $\OO_D(S)=\bigcup_{J \in S}D_J$. On the other hand, $M=\bigcup_{J \in S}\C_J$ and hence $\OO_D(S)$ is an open subset of~$M$. Denote by $\partial \OO_D(S)$ the boundary of $\OO_D(S)$ in $M$. It is easy to see that $\partial \OO_D(S)=\bigcup_{J \in S}\partial D_J=\OO_{\partial D}(S)$. We define the \emph{outer normal vector field $\nn:\partial\OO_D(S) \lra A$ to $\partial\OO_D(S)$} as the slice function induced by the stem function $\n$. More explicitly, $\nn$ is defined as follows. Given $x=\alpha+\beta J \in \partial\OO_D(S)$, $z:=\alpha+i\beta$ belongs to $\partial D$ and hence we can write $\n(z)=\n_1+i\n_2$ for some $\n_1,\n_2 \in \R$. Then we have: $\nn(x):=\n_1+\n_2 J$.


For each non-negative integer $n$, we denote by $\eta_{n}$ the volume of the standard sphere $S^{n}$ of $\R^{n+1}$. It is well known that $\eta_n$ has the following explicit expression:
\[
\eta_{n}=\frac{2\pi^{\frac{n+1}2}}{\Gamma\left(\frac{n+1}2\right)}\;, 
\]
where $\Gamma$ is Euler's gamma function.

Let us introduce a notion of Cauchy kernel of $A$ relative to the gis  $S$.

We start recalling from \cite{GhPe_AIM} the definitions of the characteristic polynomial $\Delta_w$ of $w \in \Q_A$ and of the Cauchy kernel of $A$. $\Delta_w$ is the slice regular polynomial
\[
\Delta_{w}(x):=x^2-x\,t(w)+n(w),
\]
with zero set $\cS_w:=\{x\in \Q_A\;|\; t(x)=t(w),\ n(x)=n(w)\}$.
The Cauchy kernel for slice regular functions on $A$ is defined, for each $x\in \Q_A\setminus \SS_w$, as 
\[
C(x,w):=\Delta_w(x)^{-1}(w^c-x).
\]
$C(\,\cdot\,,w)$ is slice regular on $\Q_A\setminus \SS_w$ and has the following property \wrt the slice product of functions:
\[
C(x,w)\cdot (w-x)=1.
\]

Define $\dck:=\{(x,w) \in \Q_A \times (M \setminus \R) \, | \, \Delta_w(x) \neq 0\}$. Observe that, if $(x,w) \in \dck$ and $x=\alpha+\beta J$ for some $\alpha,\beta \in \R$ and $J \in S$, then $\Delta_w(x)$ belongs to $\C_J \setminus \{0\}$ and hence $\Delta_w(x)$ is invertible in $A$, or better, in $\C_J$. This fact ensures that the following definition is consistent.

\begin{definition}
We define the \emph{Cauchy kernel of $A$ \wrt  $S$} as the smooth function $\ck:\dck \lra A$ given by setting
\[
\ck(x,w):=\frac{2}{\eta_{m-2}} \, \frac{C(x,w)}{ \big(n(\im(w))\big)^{\frac{m-2}{2}}}=\frac{2}{\eta_{m-2}} \, \frac{C(x,w)}{\|\im(w)\|^{m-2}} \, ,
\]
where $\im(w):=(w-w^c)/2$.
\end{definition}
Note that, for each fixed $w \in M \setminus \R$, $\ck(\,\cdot\,,w)$ is a slice regular function on $\Q_A\setminus\cS_w$. In particular, when $S=\{J,-J\}$  for some $J \in \cS_A$, then $m=2$, $\eta_{m-2}=2$, and $C_S$ extends up to the real axis and coincides with the kernel $C$ of $A$.

Denote by $w=(w_0,w_1,\ldots,w_{m-1}):M \lra \R^m$ the coordinate system on $M$ sending $w$ into $\sum_{\ell=0}^{m-1}w_{\ell}v_{\ell}$ and by $dw$ the corresponding volume form $dw_0 \wedge dw_1 \wedge \cdots \wedge  dw_{m-1}$. Such a volume form induces a structure of oriented Riemannian manifold on $M$ and hence on its open subset $\OO_D(S)$. Its boundary $\partial \OO_D(S)$ is a hypersurface of $M$ of class $\mscr{C}^1$, which inherits a structure of oriented Riemannian manifold, via the standard rule ``first the outer normal vector''. Denote by $d\sigma_w$ the corresponding volume form of $\partial \OO_D(S)$. By abusing notation, we will use the letter $w$ to indicate both a point of $M$ and its coordinates \wrt $v$.

We have:

\begin{lemma} \label{lem:L^1}
For each $x \in \OO_D(S)$, the function from $\OO_D(S) \setminus (\cS_x \cup \R)$ to $A$, sending $w$ into $\ck(x,w)$, is summable on $\OO_D(S)$ \wrt $dw$ and the function from $\partial\OO_D(S) \setminus \R$ to $A$, sending $w$ into $\ck(x,w)$, is summable on $\partial\OO_D(S)$ \wrt $d\sigma_w$.
\end{lemma}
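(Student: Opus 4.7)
The plan is to isolate the two potential singular loci of $\ck(x,\cdot)$, namely the characteristic set $\cS_x$ (where $\Delta_w(x)=0$) and the real axis $\R\cap M$ (where $\|\im(w)\|^{m-2}$ vanishes), and to show in each case that the blow-up of the kernel is neutralised by the Jacobian of a polar-type parametrization of $M$ adapted to the orthogonal decomposition $M=\R\oplus M_0$ with $M_0:=M\cap\ker(t)$ of dimension $m-1$ and unit sphere~$S$.

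For the surface integral I would first note that $\cS_x\cap\partial\OO_D(S)=\emptyset$: if $x=\alpha_0+\beta_0 J\in\OO_D(S)$, then $\cS_x\cap M=\{\alpha_0\pm\beta_0 K\colon K\in S\}$, and since $D$ is open and conjugation-invariant, these points all lie in $\OO_D(S)$, hence off its boundary. Compactness of $\partial\OO_D(S)=\OO_{\partial D}(S)$ then yields a constant $M_x>0$ with $\|C(x,w)\|\le M_x$ on the boundary, whence
\[
\|\ck(x,w)\|\le\frac{2M_x}{\eta_{m-2}\,|\beta|^{m-2}}\quad\text{for }w=\alpha+\beta K\in\partial\OO_D(S)\setminus\R.
\]
Parametrising the boundary by $(z,K)\in\partial D\times S$, the induced volume form is $d\sigma_w=|\beta|^{m-2}\,\phi(z,K)\,d\tau\,d\sigma_S$ with $\phi$ continuous and bounded (the factor $|\beta|^{m-2}$ comes from the $(m-2)$-sphere $\beta\cdot S\subset M_0$ being rescaled by its radius), so $\|\ck(x,w)\|\,d\sigma_w$ is dominated by a bounded density on the compact set $\partial D\times S$ and summability follows.

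For the volume integral I would split $\OO_D(S)$ according to the two singularities. Near $\R\cap\OO_D(S)$ the polar decomposition $w=\alpha+r\omega$ with $\alpha\in\R$, $r\ge 0$, $\omega\in S$ gives $dw=r^{m-2}\,d\alpha\,dr\,d\sigma_S$ and $\|\im(w)\|=r$, and since $C(x,\cdot)$ is bounded away from $\cS_x$, the factor $r^{-(m-2)}$ in $\ck$ is cancelled exactly by the $r^{m-2}$ of the volume form. Near $\cS_x\cap M$, writing $x=\alpha_0+\beta_0 J$ and $w=\alpha+\beta K$ one computes
\[
\Delta_w(x)=(\alpha-\alpha_0)^2-\beta_0^2+\beta^2+2(\alpha_0-\alpha)\beta_0 J,
\]
which depends only on $(\alpha,\beta)$ and satisfies $\|\Delta_w(x)\|\sim 2\beta_0\sqrt{(\alpha-\alpha_0)^2+(\beta\mp\beta_0)^2}$ near a zero at $(\alpha_0,\pm\beta_0)$; hence $\|C(x,w)\|\lesssim 1/\rho$, where $\rho$ denotes the distance to $\cS_x$ in the $(\alpha,\beta)$-plane. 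Because $\cS_x\cap M$ is a disjoint union of $(m-2)$-spheres (codimension two in $M$), tubular coordinates supply the volume form $\rho\,d\rho\,d\theta\,dV_{\cS_x}$, and the $\rho^{-1}$ singularity is absorbed by the $\rho\,d\rho$ factor.

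The main obstacle is the volume estimate near $\cS_x$: one has to verify that $\Delta_w(x)$ vanishes only to first order transversally to $\cS_x\cap M$ and then combine this with the correct spherical Jacobian in a codimension-two tube. The key observation making this tractable is that $\|\Delta_w(x)\|$ depends on $w$ only through the invariants $(t(w),n(w))$, so the transverse analysis reduces to a two-dimensional computation in the $(\alpha,\beta)$-plane, with the spherical directions contributing only the regular factor $dV_{\cS_x}$.
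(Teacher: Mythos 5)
Your proposal is correct in substance, and its central mechanism is the same as the paper's: a polar-type parametrization of $\partial\OO_D(S)$ (resp.\ $\OO_D(S)$) produces a Jacobian factor $|\beta|^{m-2}$ (resp.\ $|s|^{m-2}$) that exactly cancels the factor $\|\im(w)\|^{-(m-2)}$ in $\ck$, after which everything reduces to the two-dimensional local integrability of the slice Cauchy kernel $C(x,\cdot)$. Where you differ is in the organization. The paper performs a single global change of variables: it constructs iterated polar coordinates $\varphi_{m-2}$, $\Phi_{m-1}$ with Jacobian density $\I_{m-2}(\theta)$ (Lemma \ref{lem:polar}), parametrizes the boundary by $(t,\theta)\in(0,1)\times I_{m-2}^+$ and the domain by $(r,s,\theta)\in D\times I_{m-2}^+$, and reduces the two integrals to $\int\|C(x,w)\|\I_{m-2}(\theta)\sqrt{(a')^2+(b')^2}\,dt\,d\theta$ (bounded integrand, since $\cS_x$ misses the compact boundary, exactly as you observe) and $\int_{D\times I_{m-2}^+}\|C(x,w)\|\I_{m-2}(\theta)\,dr\,ds\,d\theta$, whose inner integral over $D$ is finite for each fixed $\theta$ by the classical $O(1/\rho)$ behaviour of the Cauchy kernel on the slice $D_{J_\theta}$. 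You instead localize around the two singular sets and handle $\cS_x\cap M$ via a codimension-two tubular neighborhood; this is a legitimate coordinate-free alternative, but it requires the transversality verification you flag, which the slice-wise Fubini reduction obtains for free.

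The one place your argument as written breaks down is the degenerate case $x\in\R\cap\OO_D(S)$, which is allowed since $D$ need not avoid the real axis. There $\cS_x=\{x\}$ is a single point rather than a codimension-two sphere, the two singular loci $\R$ and $\cS_x$ intersect, and $\Delta_w(x)=(\alpha-x)^2+\beta^2$ vanishes to \emph{second} order at $w=x$, so the claimed first-order transversal vanishing and the asymptotic $\|\Delta_w(x)\|\sim 2\beta_0\rho$ degenerate to $0$ and give no bound. The estimate $\|C(x,w)\|\lesssim 1/\rho$ does survive, but only because the numerator $w^c-x$ also vanishes at $w=x$; and the integrability near $w=x$ must then be checked with the factors $\rho^{-1}$ and $\beta^{-(m-2)}$ present \emph{simultaneously} (it works: $\rho^{-1}\beta^{-(m-2)}\cdot\beta^{m-2}\,d\alpha\,d\beta\,d\sigma_S=\rho^{-1}\,d\alpha\,d\beta\,d\sigma_S$, integrable in the half-plane). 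Either treat this case separately or adopt the paper's slice-wise reduction, which covers it uniformly.
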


Denote by $\overline{\OO}_D$ the closure of $\OO_D$ in $\Q_A$, which coincides with $\OO_{\overline{D}}$ if $\overline{D}$ denotes the closure of $D$ in $\C$.

We are now in position to  state our main result.

\begin{theorem} \label{thm:main}
Let $S$ be a gis of $A$, let $M$ be the vector subspace of $A$ indu\-cing $S$ and let $m:=\dim M$. Choose a volume form $dw$ of $M$ as above and denote by $d\sigma_w$ the corresponding volume form of $\partial\OO_D(S)$.
Then, for each $f \in \mc{S}^1(\overline{\OO}_D,A)$ and for each $x \in \OO_D$, it holds:
\begin{equation} \label{eq:volume-cauchy}
f(x)=\frac{1}{2\pi}\int_{\partial\OO_D(S)}\ck(x,w) \, \nn(w) f(w) \, d\sigma_w-\frac{1}{\pi}\int_{\OO_D(S)}\ck(x,w) \, \dd{f}{x^c}(w) \, dw\;.
\end{equation}
If $f$ is slice regular on $\OO_D$, then formula \eqref{eq:volume-cauchy} holds with only the boundary term.
\end{theorem}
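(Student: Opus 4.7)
The plan is to integrate over $K\in S$ the slice Cauchy--Pompeiu formula on the single slice $\C_K$ proved in \cite{GhPe_AIM}, and then recognize the resulting iterated integrals as integrals over $\OO_D(S)$ and $\partial\OO_D(S)$ via the polar decomposition $M\setminus\R\simeq\R\times(0,\infty)\times S$. The Jacobian of these polar coordinates is $\beta^{m-2}$ with $\beta=\|\im(w)\|$, which cancels exactly the factor $1/\|\im(w)\|^{m-2}$ built into the kernel $\ck$.

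\textbf{Step 1 (slicewise formula).} For every $K\in S$ and every $x\in\OO_D$, one expects
\begin{equation}\label{eq:sliceCP}
f(x)=\frac{1}{2\pi}\int_{\partial D_K}C(x,w)\,\nn(w)\,f(w)\,d|w|_K-\frac{1}{\pi}\int_{D_K}C(x,w)\,\dd{f}{x^c}(w)\,dA_K(w),
\end{equation}
where $d|w|_K$ and $dA_K$ are the arc length on $\partial D_K$ and area on $D_K$ induced by the orthonormal basis $(1,K)$ of $\C_K$. For $x\in D_K$ this is the classical Cauchy--Pompeiu formula in $\C_K$ applied to $f|_{\overline D_K}$: the identities $C(x,w)=(w-x)^{-1}$ for $x,w$ on the same slice, $dw=K\nn(w)\,d|w|_K$ on $\partial D_K$, and (by direct computation on the components $F_1,F_2$ of the stem function) the coincidence of $\dd{f}{x^c}$ with the Cauchy--Riemann operator $\tfrac12(\partial_\alpha+K\partial_\beta)$ on $\C_K$, make this immediate. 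For general $x\in\OO_D$, the right-hand side of \eqref{eq:sliceCP} is a slice function of $x$ (since for each fixed $w$, $C(\cdot,w)$ is slice regular), so by the identity principle it coincides with $f(x)$ throughout $\OO_D$; this is exactly the extension mechanism used in \cite{GhPe_AIM}.

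\textbf{Step 2 (average over $S$ and polar coordinates).} Integrating \eqref{eq:sliceCP} over $K\in S$ against $d\sigma_S$ and using $\int_S d\sigma_S=\eta_{m-2}$ yields
$$\eta_{m-2}\,f(x)=\frac{1}{2\pi}\int_S\!\int_{\partial D_K}C(x,w)\nn(w)f(w)\,d|w|_K\,d\sigma_S-\frac{1}{\pi}\int_S\!\int_{D_K}C(x,w)\dd{f}{x^c}(w)\,dA_K\,d\sigma_S.$$
Introduce polar coordinates $(\alpha,\beta,K)\in\R\times(0,\infty)\times S\mapsto\alpha+\beta K\in M\setminus\R$. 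Since $M=\R\oplus M_0$ is orthogonal and $S$ is the unit sphere of $M_0$, infinitesimal $K$-variations at $w=\alpha+\beta K$ are scaled by $\beta=\|\im(w)\|$, so
$$dw=\beta^{m-2}\,d\alpha\,d\beta\,d\sigma_S(K),\qquad d\sigma_w\big|_{\partial\OO_D(S)}=\beta^{m-2}\,d|w|_K\,d\sigma_S(K),$$
and the parametrization $(K,w)\in S\times D_K\to\OO_D(S)$ (and similarly on the boundary) is $2$-to-$1$ via the identification $(K,\alpha+\beta K)\sim(-K,\alpha+|\beta|(-K))$. Combining these with the identity $\ck(x,w)=\tfrac{2}{\eta_{m-2}}\,C(x,w)/\|\im(w)\|^{m-2}$ and applying Fubini (legitimized by Lemma~\ref{lem:L^1}), the $\beta^{m-2}$ factor cancels and
\begin{align*}
\int_S\!\int_{\partial D_K}\!C(x,w)\nn(w)f(w)\,d|w|_K\,d\sigma_S&=\eta_{m-2}\!\int_{\partial\OO_D(S)}\!\ck(x,w)\nn(w)f(w)\,d\sigma_w,\\
\int_S\!\int_{D_K}\!C(x,w)\dd{f}{x^c}(w)\,dA_K\,d\sigma_S&=\eta_{m-2}\!\int_{\OO_D(S)}\!\ck(x,w)\dd{f}{x^c}(w)\,dw.
\end{align*}
Dividing by $\eta_{m-2}$ produces \eqref{eq:volume-cauchy}; the slice regular case is immediate since $\dd{f}{x^c}\equiv 0$.

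\textbf{Main obstacle.} The delicate point is the slicewise formula \eqref{eq:sliceCP} for $x$ off the slice $\C_K$: one cannot apply planar Cauchy--Pompeiu directly, and must instead identify the right-hand side as a slice function of $x$ and invoke the identity principle, being careful with the non-commutative ordering (which forces $\nn(w)$ to sit between $C(x,w)$ and $f(w)$, stemming from $dw/(2\pi K)=\nn(w)\,d|w|_K/(2\pi)$ on $\partial D_K$). The remaining bookkeeping is the factor-of-$2$ arising from the two-to-one polar parametrization, which is cancelled precisely by the $2/\eta_{m-2}$ built into the definition of $\ck$.
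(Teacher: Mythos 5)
Your proposal is correct and follows essentially the same route as the paper: reduce to the single-slice Cauchy--Pompeiu formula of Theorem~27 of \cite{GhPe_AIM} and average it over $S$ via polar coordinates on $M\setminus\R$, with the factor $2/\eta_{m-2}$ in $\ck$ absorbing the Jacobian $\|\im(w)\|^{m-2}$ and the overcounting. The only cosmetic difference is that the paper parametrizes $\OO_D(S)$ injectively by (full conjugation-invariant slice) $\times$ (hemisphere $I_{m-2}^+$), whereas you use the full sphere with a two-to-one identification; the bookkeeping comes out the same.
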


Formula \eqref{eq:volume-cauchy} is still valid for all functions in $\mc{S}^1(\OO_D,A)$, which admit a continuous extension on $\overline{\OO}_D$. This can be seen by approximating the domain with smaller subdomains. Using the same strategy, one can relax also the assumption on the $\mscr{C}^1$-regularity of $\partial D$ and hence of $\partial \OO_D(S)$.

The reader observes that, if $S$ is equal to the gis $\{J,-J\}$ for some $J \in \cS_A$, then formula (\ref{eq:volume-cauchy}) reduces to the Cauchy formula obtained in Theorem~27 of \cite{GhPe_AIM}.  

For each continuous slice function $f \in \mc{S}^0(\partial\OO_D(S),A)$, we can consider the \emph{Cauchy-type integrals} $F_S^+:\OO_D \lra A$ and $F_S^-:\Q_A\setminus\overline{\OO}_D \lra A$ defined respectively by setting
\begin{equation} \label{eq:spj-formula}
F_S^\pm(x):=\frac{1}{2\pi}\int_{\partial\OO_D(S)}\ck(x,w) \, \nn(w) f(w) \, d\sigma_w\;.
\end{equation}
Observe that $F_S^\pm$ are slice regular functions. If the boundary function $f$ is of class $\mscr{C}^1$ (in fact, a H\"older condition suffices), then a \emph{{S}okhotski\u\i--Plemelj jump formula} is valid.

\begin{theorem}\label{thm:jump}
Let $S$ be a gis of $A$, let $f \in \mc{S}^1(\partial\OO_D(S),A)$ and let $F_S^\pm$ be the functions defined in $(\ref{eq:spj-formula})$. Then $F_S^+$ extends continuously to $\overline{\OO}_D$ and $F_S^-$ extends continuously to $\Q_A\setminus\OO_D$. Moreover, for each $x \in \partial\OO_D(S)$, it holds:
\begin{equation} \label{eq:jump}
f(x)=F_S^+(x)-F_S^-(x)\;.
\end{equation}
\end{theorem}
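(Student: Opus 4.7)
\emph{Proof plan.} My plan is to reduce the jump formula to three applications of the volume Cauchy formula (Theorem~\ref{thm:main}), after first extending $f$ to a $\mscr{C}^1$ slice function on all of $\Q_A$. Since $f \in \Sl^1(\partial\OO_D(S),A)$ comes from a stem function $F$ of class $\mscr{C}^1$ in a conjugation-invariant neighborhood of $\partial D$, a standard $\mscr{C}^1$-cutoff argument (symmetrized to preserve the stem condition) produces a stem function $\tilde F$ of class $\mscr{C}^1$ on $\C$ with $\tilde F = F$ on $\partial D$; set $\tilde f := \II(\tilde F)$.

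Applying Theorem~\ref{thm:main} to $\tilde f$ on $\OO_D$ gives, for every $x \in \OO_D$,
\begin{equation} \label{eq:int-rep}
F_S^+(x) = \tilde f(x) + \frac{1}{\pi}\int_{\OO_D(S)} \ck(x,w)\,\frac{\partial \tilde f}{\partial x^c}(w)\,dw,
\end{equation}
and a uniform-in-$x$ strengthening of Lemma~\ref{lem:L^1} combined with dominated convergence shows that the volume integral, hence $F_S^+$, extends continuously to $\overline{\OO}_D$. Next, choose a bounded open disk $E \subset \C$ with $\overline D \subset E$ and $\partial E$ smooth; then the annulus $E \setminus \overline D$ has $\mscr{C}^1$ boundary $\partial E \cup \partial D$ and its outer normal on $\partial D$ equals $-\n$. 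Applying Theorem~\ref{thm:main} to $\tilde f$ on $E \setminus \overline D$ and solving for $F_S^-(x)$ yields, for $x \in \OO_E \setminus \overline{\OO}_D$,
\begin{equation} \label{eq:ext-rep}
F_S^-(x) = \frac{1}{2\pi}\int_{\partial\OO_E(S)}\ck(x,w)\,\nn_E(w)\,\tilde f(w)\,d\sigma_w - \tilde f(x) - \frac{1}{\pi}\int_{\OO_E(S)\setminus\overline{\OO_D(S)}}\ck(x,w)\,\frac{\partial\tilde f}{\partial x^c}(w)\,dw.
\end{equation}
The same dominated-convergence argument, together with the trivial continuity of $F_S^-$ away from $\partial\OO_D(S)$, extends $F_S^-$ continuously to $\Q_A \setminus \OO_D$.

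For the jump identity, fix $x_0 \in \partial\OO_D(S) \subset \OO_E$ and apply Theorem~\ref{thm:main} a third time, now to $\tilde f$ on $\OO_E$ at $x_0$; splitting the resulting volume integral as $\int_{\OO_E(S)} = \int_{\OO_D(S)} + \int_{\OO_E(S)\setminus\overline{\OO_D(S)}}$ and substituting into \eqref{eq:int-rep} minus \eqref{eq:ext-rep} at $x_0$, the $\partial\OO_E(S)$-boundary term and both volume integrals cancel exactly, leaving $F_S^+(x_0) - F_S^-(x_0) = \tilde f(x_0) = f(x_0)$, which is \eqref{eq:jump}.

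The main obstacle is the uniform integrability needed to justify the boundary extensions of the volume integrals in \eqref{eq:int-rep} and \eqref{eq:ext-rep}: Lemma~\ref{lem:L^1} guarantees only pointwise summability, and one must sharpen it to a locally uniform-in-$x$ integrable majorant for $\ck(x,w)(\partial\tilde f/\partial x^c)(w)$. Concretely, as $x$ approaches $\partial\OO_D(S)$ the singularity of $\Delta_w(x)^{-1}$ at $w \in \cS_x$ and that of $\|\im w\|^{-(m-2)}$ at $w \in \R$ must be dominated simultaneously; a natural route is to parametrize $\OO_D(S)$ via the sphere fibration $D \times S \to \OO_D(S)$, $(\alpha+i\beta, J) \mapsto \alpha + \beta J$, and estimate the Jacobian to reduce the bound to one-variable Cauchy estimates.
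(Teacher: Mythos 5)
Your route is genuinely different from the paper's. You globalize $f$ to a $\mscr{C}^1$ slice function $\tilde f$ via a symmetrized cutoff of the stem function and then deduce everything from three applications of Theorem~\ref{thm:main} (on $\OO_D$, on an annular region $E\setminus\overline D$, and on $\OO_E$), so that the jump drops out of an exact cancellation of the volume integrals. The paper never leaves dimension one: for each $\theta$ it applies the classical Sokhotski\u\i--Plemelj theorem to the restriction of $f$ to the slice $\C_{J_\theta}$, propagates the slice-wise jump to all of $\partial\OO_D(S)$ by the representation formula, and recovers $F_S^\pm$ as the weighted average $\frac{2}{\eta_{m-2}}\int_{I_{m-2}^+}\I_{m-2}(\theta)\,F_\theta^\pm\,d\theta$ coming from \eqref{eq:surface-int}; continuity up to the boundary then follows from a bound on $F_\theta^\pm$ uniform in $\theta$ and dominated convergence in the parameter $\theta$, where a fixed majorant genuinely exists. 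Your algebraic skeleton --- the two representation identities for $F_S^+$ and $F_S^-$, the sign of the normal on the inner boundary of the annulus, and the cancellation giving $F_S^+(x_0)-F_S^-(x_0)=\tilde f(x_0)$ --- is correct, and it has the merit of not invoking the classical Plemelj theorem at all.

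There is, however, a genuine gap at the step you yourself flag: the continuity of the volume potential $V(x)=\int_{\OO_D(S)}\ck(x,w)\,(\partial\tilde f/\partial x^c)(w)\,dw$ up to and across $\partial\OO_D(S)$, on which both boundary extensions and the evaluation of your representations at $x_0$ rest. The tool you name, dominated convergence with a ``locally uniform-in-$x$ integrable majorant,'' cannot work as stated. Writing $x=\alpha+\beta I$ and $w=\gamma+\delta J$ with $z_x=\alpha+i\beta$, $z_w=\gamma+i\delta$, one has $\|\Delta_w(x)\|\sim|z_x-z_w|\,|z_x-\overline{z_w}|$, so after the factor $\|\im(w)\|^{m-2}$ is absorbed by the Jacobian of your fibration the kernel behaves like $|z_x-z_w|^{-1}$; consequently $w\mapsto\sup_{x\in B}\|\ck(x,w)\|$ is $+\infty$ on a set of positive measure for every open $B$, because the singular sphere $\cS_x$ sweeps through the domain as $x$ varies, and no integrable majorant exists. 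What is actually needed is the standard equi-integrability estimate for weakly singular kernels, namely $\lim_{\delta\to0}\sup_{x}\int_{\{w\in\OO_D(S)\,:\,|z_w-z_x|<\delta\}}\|\ck(x,w)\|\,dw=0$, combined with the continuity of the truncated integrals. This is classical and your polar-coordinate reduction does lead there, but it is the analytic heart of the theorem and must be carried out rather than sketched; until it is, neither the continuous extension of $F_S^\pm$ nor formula \eqref{eq:jump} is established.
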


\begin{corollary}\label{cor:ext}
Let $S$ be a gis of $A$ and let $f \in \mc{S}^1(\partial\OO_D(S),A)$. Then there exists $F \in \mc{SR}(\OO_D,A) \cap \mscr{C}^0(\overline{\OO}_D,A)$ such that $F=f$ on $\partial\OO_D(S)$ if and only if $F_S^-$ vanishes on $\partial\OO_D(S)$. In this case, the extension $F$ is given by $F_S^+$. 
\end{corollary}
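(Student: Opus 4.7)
The plan is to derive the corollary directly from Theorem \ref{thm:jump} and Theorem \ref{thm:main} (in the continuous-extension variant described in the remark immediately after the latter). Everything reduces to combining the integral representation with the jump identity $f=F_S^+-F_S^-$ on $\partial\OO_D(S)$.

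For the sufficiency direction, assume $F_S^-\equiv 0$ on $\partial\OO_D(S)$. Set $F:=F_S^+$. As noted after \eqref{eq:spj-formula}, $F_S^+$ is slice regular on $\OO_D$. By Theorem \ref{thm:jump}, $F_S^+$ extends continuously to $\overline\OO_D$, so $F\in\mc{SR}(\OO_D,A)\cap\mscr{C}^0(\overline\OO_D,A)$. Evaluating \eqref{eq:jump} at a point $x\in\partial\OO_D(S)$ and using the assumed vanishing of $F_S^-$ yields $f(x)=F_S^+(x)=F(x)$, so $F$ is the required extension.

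For the necessity direction, assume an extension $F\in\mc{SR}(\OO_D,A)\cap\mscr{C}^0(\overline\OO_D,A)$ with $F=f$ on $\partial\OO_D(S)$ exists. Since $F$ is slice regular on $\OO_D$ and admits a continuous extension to $\overline\OO_D$, the remark following Theorem \ref{thm:main} allows us to apply the volume Cauchy formula \eqref{eq:volume-cauchy}, whose volume term vanishes because $\partial F/\partial x^c=0$; thus for every $x\in\OO_D$
\begin{equation*}
F(x)=\frac{1}{2\pi}\int_{\partial\OO_D(S)}\ck(x,w)\,\nn(w)F(w)\,d\sigma_w=\frac{1}{2\pi}\int_{\partial\OO_D(S)}\ck(x,w)\,\nn(w)f(w)\,d\sigma_w=F_S^+(x),
\end{equation*}
where the substitution $F(w)=f(w)$ on $\partial\OO_D(S)$ is legitimate because the integration takes place on that hypersurface only. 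Hence $F\equiv F_S^+$ on $\OO_D$, and by continuity of both sides on $\overline\OO_D$ (using Theorem \ref{thm:jump} for $F_S^+$) the equality extends to $\partial\OO_D(S)$, giving $F_S^+=F=f$ there. Plugging this into the jump formula \eqref{eq:jump} forces $F_S^-=F_S^+-f=0$ on $\partial\OO_D(S)$. The same computation simultaneously shows that whenever such an $F$ exists it must agree with $F_S^+$, which is the uniqueness statement implicit in the last sentence of the corollary.

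There is no serious obstacle: the argument is essentially a bookkeeping exercise. The only delicate point is that the extension $F$ is merely continuous (not $\mscr{C}^1$) up to the boundary, so Theorem \ref{thm:main} does not apply verbatim. This is resolved by the approximation-by-subdomains device mentioned in the paragraph after Theorem \ref{thm:main}, applied to an exhaustion of $\OO_D$ by slightly shrunken circularizations $\OO_{D_\varepsilon}$ with $D_\varepsilon\Subset D$, followed by passing to the limit using the uniform continuity of $F$ on $\overline\OO_D$ and the Lebesgue summability of $\ck(x,\cdot)$ guaranteed by Lemma \ref{lem:L^1}.
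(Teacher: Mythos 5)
Your proof is correct and follows the route the paper intends: the corollary is stated as a direct consequence of Theorem \ref{thm:jump} (which gives sufficiency and the continuity of $F_S^+$ up to $\overline{\OO}_D$) together with the reproducing Cauchy formula \eqref{eq:volume-cauchy} applied to the hypothetical extension $F$ (which gives $F=F_S^+$ and hence, via the jump relation, $F_S^-=0$). You also correctly flag and resolve the only delicate point, namely that $F$ is merely continuous up to the boundary, by invoking the approximation-by-subdomains remark following Theorem \ref{thm:main}.
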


\begin{remark}
In general, the integrals $F^\pm_S$ depend on $S$. For example, if $A=\HH$, $\OO_D$ is the unit ball, $S=\{i,-i\}$, $S'=\{j,-j\}$ and $f(x)=x_0+ix_1$, then $F^+_S(x)=x$, $F^-_S(x)=0$, while $F^+_{S'}(x)=x/2$, 
$F^-_{S'}(x)=-1/(2x)$. 
\end{remark}

We give the proof of the preceding results in the next section.

We conclude the present section by reformulating our Cauchy formula \eqref{eq:volume-cauchy} in the quaternionic and in the Clifford algebra cases. 


\subsection{The quaternionic case}

If $A$ is the algebra $\HH$ of (real) quaternions, the quadratic cone is the whole algebra (see \cite{GhPe_AIM}) and therefore we can take $M=\HH$ and $S=\cS_\HH$ as a gis. In this case, $\OO_D(S)=\OO_D$ is an open domain in $\HH$, with $\partial\OO_D(S)=\partial\OO_D$.

\begin{corollary}
For each slice function $f \in \mc{S}^1(\overline{\OO}_D,\HH)$ and for each $x \in \OO_D$, it holds:
\begin{equation*} \label{eq:volume-cauchy-H}
f(x)=\frac{1}{2\pi}\int_{\partial\OO_D}C_{\cS_\HH}(x,w) \, \nn(w) f(w) \, d\sigma_w-\frac{1}{\pi}\int_{\OO_D}C_{\cS_\HH}(x,w) \, \dd{f}{x^c}(w) \, dw\;,
\end{equation*}
where, for $x\in\HH$ and $w=w_0+w_1i+w_2j+w_3k\in\HH\setminus\R$ with  $w\notin\cS_x$, the kernel is
\[C_{\cS_\HH}(x,w)=\frac1{2\pi} \, \frac{\Delta_w(x)^{-1}(\overline w-x)}{w_1^2+w_2^2+w_3^2}\;.\]
\end{corollary}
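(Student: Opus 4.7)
The plan is to derive this corollary as an immediate specialization of Theorem \ref{thm:main} to the data $M = \HH$, $S = \cS_\HH$. First I would verify that these choices are admissible: by Proposition~1 of \cite{GhPe_AIM} the quadratic cone of $\HH$ equals all of $\HH$, so the inclusions $\R \subset M = \HH \subset \Q_\HH$ are trivial, and $S = M \cap \cS_\HH = \cS_\HH$ is a gis whose inducing subspace is $M = \HH$. Hence $m := \dim M = 4$, and since every $D_J$ with $J \in \cS_\HH$ appears in the union defining $\OO_D(S)$, the identifications $\OO_D(S) = \OO_D$ and $\partial\OO_D(S) = \partial\OO_D$ hold, matching the form in which the corollary writes the integrals.

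Second, I would unpack the kernel explicitly. The norm supplied by Lemma~\ref{lem:gis} coincides with the usual Euclidean norm on $\HH$, because $n(x) = |x|^2$ holds identically on $\HH$. Writing $w = w_0 + w_1 i + w_2 j + w_3 k$, one has $\im(w) = w_1 i + w_2 j + w_3 k$ and $\|\im(w)\|^2 = w_1^2 + w_2^2 + w_3^2$. With $m-2 = 2$ and $\eta_{m-2} = \eta_2 = 4\pi$, the defining formula
\[
\ck(x,w) = \frac{2}{\eta_{m-2}}\, \frac{C(x,w)}{\|\im(w)\|^{m-2}}
\]
collapses to $C_{\cS_\HH}(x,w) = \tfrac{1}{2\pi}\,\Delta_w(x)^{-1}(\overline{w}-x) \,/\, (w_1^2+w_2^2+w_3^2)$, which is precisely the expression stated in the corollary. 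The volume form $dw$ on $\HH$ and the surface form $d\sigma_w$ on $\partial\OO_D$ are the standard Euclidean ones, consistent with the orthonormal-basis setup underlying the theorem.

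Finally, it only remains to substitute these data into formula \eqref{eq:volume-cauchy}; the claim follows immediately. There is essentially no obstacle here: the whole argument is a specialization of Theorem~\ref{thm:main}, and the only computations of substance are the evaluation $\eta_2 = 4\pi$, which produces the factor $\tfrac{1}{2\pi}$ inside $C_{\cS_\HH}$, together with the identification of $\|\im(w)\|^2$ with the Euclidean expression $w_1^2 + w_2^2 + w_3^2$.
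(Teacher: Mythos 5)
Your specialization is correct and matches the paper's (implicit) derivation exactly: the paper likewise obtains this corollary by taking $M=\HH$, $S=\cS_\HH$ in Theorem \ref{thm:main}, using that $\Q_\HH=\HH$, so that $m=4$, $\eta_2=4\pi$, and $\OO_D(S)=\OO_D$. The evaluation of the kernel constant and of $\|\im(w)\|^2$ is exactly the computation required, and nothing further is needed.
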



\subsection{The Clifford algebra case}

If $A$ is the real Clifford algebra $\R_n$ with signature $(0,n)$, the quadratic cone $\Q_n$ contains the subspace $\R^{n+1}$ of \emph{paravectors} (see \cite{GhPe_AIM,GhPe_Trends}).
We can then take $M=\R^{n+1}$ and $S=\cS^{n-1}=\{x=x_1e_1+\cdots+ x_ne_n\in\R_n \,|\, x_1^2+\cdots+x_n^2=1\}$ as a gis. Here $e_1,\ldots,e_n$ denote the basic generators of $\R_n$, satisfying the relations $e_ie_j+e_je_i=-2\delta_{ij}$. In this case, $\OO_D(\cS^{n-1})$ is an open domain in $\R^{n+1}$ and we can take $v=(1,e_1,\ldots,e_n)$ as orthonormal basis of $M$.
Slice regularity on $\R_n$ generalizes the concept of \emph{slice monogenic functions} introduced in \cite{CoSaSt2009Israel}. If $D$ intersects the real axis, then the restriction of a slice regular function $f$ on $\OO_D$ to $\R^{n+1}$ is a slice monogenic function. Conversely, each slice monogenic function is the restriction of a unique slice regular function.

\begin{corollary}
Let $w=w_0+w_1e_1+\cdots+w_ne_n \in \R^{n+1} \setminus \R$ and let $\overline w=w_0-w_1e_1-\cdots-w_ne_n$ be its Clifford conjugate. For $x \in \Q_{\R_n}$ and $w \notin \cS_x$, consider the Cauchy kernel 
\[
C_{\cS^{n-1}}(x,w)=\frac2{\eta_{n-1}} \, \frac{\Delta_w(x)^{-1}(\overline w-x)}{(w_1^2+\cdots+w_n^2)^{\frac{n-1}{2}}}\;.
\]
Then, for each slice function $f \in \mc{S}^1(\overline{\OO}_D,\R_n)$ and for each $x \in \OO_D$, it holds:
\begin{equation*} \label{eq:volume-cauchy-Rn}
f(x)=\frac{1}{2\pi}\int_{\partial\OO_D(\cS^{n-1})}C_{\cS^{n-1}}(x,w) \, \nn(w) f(w) \, d\sigma_w-\frac{1}{\pi}\int_{\OO_D(\cS^{n-1})}C_{\cS^{n-1}}(x,w) \, \dd{f}{x^c}(w) \, dw\;.
\end{equation*}
If $f$ is slice monogenic on $\OO_D(\cS^{n-1})$, continuous up to the boundary, and $x\in\OO_D(\cS^{n-1})$,  then
\begin{equation*} \label{eq:monogenic}
f(x)=\frac{1}{2\pi}\int_{\partial\OO_D(\cS^{n-1})}C_{\cS^{n-1}}(x,w) \, \nn(w) f(w) \, d\sigma_w\;.
\end{equation*}
\end{corollary}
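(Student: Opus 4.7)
The plan is to obtain this corollary as a direct specialization of Theorem \ref{thm:main} applied to $A=\R_n$ with the gis $S=\cS^{n-1}$, and then to deduce the purely boundary version for slice monogenic data from the correspondence slice monogenic $\leftrightarrow$ slice regular recalled in the paragraph preceding the statement.

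First I would verify that the hypotheses of Theorem \ref{thm:main} are met. The paravector space $M:=\R^{n+1}$ satisfies $\R\subset M\subset\Q_{\R_n}$, and by the identification $\cS_A=\{J\,|\,t(J)=0,\,n(J)=1\}$ one has $M\cap\cS_{\R_n}=\cS^{n-1}$; hence $\cS^{n-1}$ is a gis of $\R_n$ with inducing subspace $M$, of dimension $m=n+1$. A short computation using $e_ie_j+e_je_i=-2\delta_{ij}$ shows that $v=(1,e_1,\dots,e_n)$ is orthonormal for the scalar product $(x,y)=\tfrac12 t(xy^c)$ on $M$, so the volume form $dw$ and the surface form $d\sigma_w$ coming from this basis coincide with Lebesgue measure on $\R^{n+1}$ and with the standard surface measure on $\partial\OO_D(\cS^{n-1})$ respectively.

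Next I would specialize the kernel. For $w=w_0+w_1e_1+\cdots+w_ne_n\in M\setminus\R$ one has $w^c=\overline w$, $\im(w)=w_1e_1+\cdots+w_ne_n$, and therefore $\|\im(w)\|^2=n(\im(w))=w_1^2+\cdots+w_n^2$. Substituting $m=n+1$ into
\[
\ck(x,w)=\frac{2}{\eta_{m-2}}\,\frac{\Delta_w(x)^{-1}(w^c-x)}{\|\im(w)\|^{m-2}}
\]
recovers verbatim the kernel $C_{\cS^{n-1}}(x,w)$ of the statement. Applying Theorem \ref{thm:main} with these data yields the Cauchy--Pompeiu formula for every $f\in\mc{S}^1(\overline{\OO}_D,\R_n)$ and every $x\in\OO_D$.

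For the slice monogenic assertion I would proceed as follows. Given $f$ slice monogenic on $\OO_D(\cS^{n-1})$ and continuous up to the boundary, the correspondence noted in the excerpt provides a unique $\tilde f\in\mc{SR}(\OO_D,\R_n)$ such that $\tilde f|_{\R^{n+1}}=f$; since its stem function components are continuous on $\overline D$, $\tilde f$ extends continuously to $\overline{\OO}_D$. Applying the formula already established to $\tilde f$ (invoking the remark following Theorem \ref{thm:main} to accommodate the mere continuity at the boundary), the volume term vanishes because $\partial\tilde f/\partial x^c\equiv 0$, and since the domain of integration $\partial\OO_D(\cS^{n-1})$ lies in $\R^{n+1}$ one may replace $\tilde f$ by $f$ in the surviving boundary integral. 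The only mildly delicate point is the continuous-up-to-the-boundary extension of the formula mentioned above; once that remark is granted, no further obstacle remains.
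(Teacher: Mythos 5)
Your proposal is correct and matches the paper's intent exactly: the corollary is presented there as a direct specialization of Theorem \ref{thm:main} to $A=\R_n$, $M=\R^{n+1}$, $S=\cS^{n-1}$ (so $m=n+1$ and $\eta_{m-2}=\eta_{n-1}$), with the slice monogenic case handled via the unique slice regular extension and the vanishing of the volume term. Your verification that $v=(1,e_1,\dots,e_n)$ is orthonormal, that $w^c=\overline w$ and $n(\im(w))=w_1^2+\cdots+w_n^2$ on paravectors, and your appeal to the remark after Theorem \ref{thm:main} for the merely continuous boundary behaviour, supply precisely the details the paper leaves implicit.
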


Another instance of Cauchy formula \eqref{eq:volume-cauchy} on $\R_n$ can be obtained by choosing the gis $S=\{J,-J\}$ for some imaginary unit $J\in\cS_{\R_n}$. If $J \in \cS^{n-1}\subset\cS_{\R_n}$, then formula (\ref{eq:volume-cauchy}) reduces to the Cauchy and Pompeiu formulas given in \cite{CoSaJMAA2011,CoSaStMMJ2011} for slice monogenic functions.


\section{Proofs}

Let us construct explicitly polar coordinates on the standard sphere $S^n$ of $\R^{n+1}$ and on $\R^{n+1}$ itself.

Let $I_1$ be the interval $(0,2\pi)$ of $\R$, let $I_1^+$ be the interval $(0,\pi)$, let $N$ be the subset $[0,+\infty) \times \{0\}$ of $\R^2=\R \times \R$ and let $\varphi_1:I_1 \lra \R^2 \setminus N$ be the smooth embedding defined by setting $\varphi_1(\theta_1):=(\cos(\theta_1),\sin(\theta_1))^T$. For each $n \geq 2$, identify $\R^n$ with $\R^{n-1} \times \R$ and define the open subsets $I_n$ and $I_n^+$ of $\R^n$ and the smooth embedding $\varphi_n:I_n \lra \R^{n+1} \setminus (N \times \R^{n-2})$ by induction as follows: $I_n:=I_{n-1} \times (-\pi/2,\pi/2)$, $I_n^+:=I_{n-1} \times (0,\pi/2)$ and  $\varphi_n(\theta^{\pr},\theta_n):=(\cos(\theta_n)\varphi_{n-1}(\theta^{\pr}),\sin(\theta_n))^T$.

Let $n \geq 1$. It is easy to verify that $\varphi_n(I_n)$ is equal to the dense open subset $S^n \setminus (N \times \R^{n-1})$ of $S^n$, where $N \times \R^{n-1}$ denotes $N$ if $n=1$. The map $\varphi_n$ is a smooth diffeomorphism onto its image, called polar coordinates on $S^n$. Similarly, if $S^n_+$ denotes the northern hemisphere $S^n \cap \{x_{n+1}>0\}$ of $S^n$, then $\varphi_n$ induces a smooth diffeomorphism from $I_n^+$ to the dense open subset $S^n_+ \setminus (N \times \R^{n-1})$ of $S^n_+$.

Let $\R_*:=\R \setminus \{0\}$. Define $H_{n+1}:=(N \times \R^{n-1}) \cup (\R^n \times \{0\}) \subset \R^{n+1}$ and the polar coordinates $\Phi_{n+1}:\R_* \times I_n^+ \lra \R^{n+1} \setminus H_{n+1}$ of $\R^{n+1}$ by setting $\Phi_{n+1}(\rho,\theta):=\rho\varphi_n(\theta)$. Denote by $J_{\Phi_{n+1}}$ and $J_{\varphi_n}$ the jacobian matrices of $\Phi_{n+1}$ and of $\varphi_n$, respectively. Observe that, given $\rho \in \R_*$ and $\theta \in I_n^+$,  $J_{\Phi_{n+1}}(\rho,\theta)$ is equal to the block matrix $(\varphi_n(\theta)|\rho J_{\varphi_n}(\theta))$.
Define the smooth function $\I_n:I_n^+ \lra \R$ by setting
\begin{equation} \label{eq:I}
\I_n(\theta):=\det \big(J_{\Phi_{n+1}}(1,\theta)\big)=\det \big((\varphi_n(\theta)|J_{\varphi_n}(\theta))\big)
\end{equation}
for each $\theta \in I_n^+$.
Let us prove an elementary, but very useful, lemma.

\begin{lemma} \label{lem:polar}
For each integer $n \geq 1$, the following assertions hold:
\begin{itemize}
 \item[$(\mr{i})$] $\det \big(J_{\Phi_{n+1}}(\rho,\theta)\big)=\rho^n\I_n(\theta)$ for each $(\rho,\theta) \in \R_* \times I_n^+$.
 \item[$(\mr{ii})$] $(\varphi_n)^TJ_{\varphi_n}=0$ on $I_n^+$.
 \item[$(\mr{iii})$] $\I_n(\theta)=\prod_{k=2}^n(\cos(\theta_k))^{k-1}$ for each $\theta=(\theta_1,\ldots,\theta_n) \in I_n^+$. The latter product reduces to $1$ if $n=1$.
 \item[$(\mr{iv})$] $\I_n=\det \big((\varphi_n|J_{\varphi_n})\big)>0$ on $I_n^+$.
 \item[$(\mr{v})$] $\I_n=\sqrt{\det\big((J_{\varphi_n})^TJ_{\varphi_n}\big)}$ on $I_n^+$.
 \item[$(\mr{vi})$] $\int_{I_n^+}\I_n(\theta) \, d\theta=\eta_n/2$.
\end{itemize}
\end{lemma}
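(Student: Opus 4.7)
\textbf{Proof plan for Lemma \ref{lem:polar}.}

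Parts (i) and (ii) are direct. For (i), the definition $\Phi_{n+1}(\rho,\theta)=\rho\varphi_n(\theta)$ gives $J_{\Phi_{n+1}}(\rho,\theta)=(\varphi_n(\theta)\mid\rho J_{\varphi_n}(\theta))$; multilinearity of the determinant in the last $n$ columns pulls out $\rho^n$, leaving $\I_n(\theta)$ by its definition \eqref{eq:I}. For (ii), differentiating the identity $\|\varphi_n(\theta)\|^2=1$ (which holds because $\varphi_n$ takes values in $S^n$) in each coordinate $\theta_j$ yields $\varphi_n^{T}\,\partial_{\theta_j}\varphi_n=0$, i.e.\ the relation $\varphi_n^{T}J_{\varphi_n}=0$.

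For (iii) I would proceed by induction on $n$. The base case $n=1$ is a $2\times 2$ rotation-matrix computation giving $\I_1\equiv 1$, matching the empty product. For the inductive step, write the block form
\[
(\varphi_n\mid J_{\varphi_n})=\begin{pmatrix}\cos\theta_n\,\varphi_{n-1} & \cos\theta_n\,J_{\varphi_{n-1}} & -\sin\theta_n\,\varphi_{n-1}\\ \sin\theta_n & 0 & \cos\theta_n\end{pmatrix}
\]
coming from $\varphi_n(\theta',\theta_n)=(\cos\theta_n\,\varphi_{n-1}(\theta'),\sin\theta_n)^T$, and expand the determinant along the bottom row. Only the first and last entries of that row are nonzero. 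In the first minor, factor $\cos\theta_n$ from $n-1$ columns and $-\sin\theta_n$ from one column, then perform $n-1$ column swaps to rearrange into $(\varphi_{n-1}\mid J_{\varphi_{n-1}})$; the resulting signs combine with the cofactor sign so that the contribution is $\sin^2\theta_n\,(\cos\theta_n)^{n-1}\,\I_{n-1}(\theta')$. The last-column minor is $(\cos\theta_n)^n\,\I_{n-1}(\theta')$. Summing and invoking $\sin^2\theta_n+\cos^2\theta_n=1$ gives the recursion $\I_n(\theta)=(\cos\theta_n)^{n-1}\I_{n-1}(\theta')$, and the induction closes. The bookkeeping of signs in the column permutation is the place I expect a slip, so I would double-check that step carefully.

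Part (iv) is then immediate: on $I_n^+$ one has $\theta_k\in(0,\pi/2)$ for $k\ge 2$, so every factor in $\prod_{k=2}^{n}(\cos\theta_k)^{k-1}$ is positive. For (v), set $A:=(\varphi_n\mid J_{\varphi_n})$ and use (ii) together with $\varphi_n^{T}\varphi_n=1$ to obtain the block decomposition
\[
A^{T}A=\begin{pmatrix}1 & 0\\ 0 & J_{\varphi_n}^{T}J_{\varphi_n}\end{pmatrix},
\]
whence $\I_n^2=(\det A)^2=\det(A^{T}A)=\det(J_{\varphi_n}^{T}J_{\varphi_n})$; taking the positive square root (by (iv)) delivers the claim. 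Finally, for (vi), the identity in (v) says that $\I_n(\theta)\,d\theta$ is exactly the Riemannian volume element on $S^n$ pulled back by $\varphi_n$, and $\varphi_n$ is a diffeomorphism from $I_n^+$ onto a dense open subset of the northern hemisphere $S^n_+$; since the complement has measure zero and $S^n_+$ has volume $\eta_n/2$, the integral evaluates to $\eta_n/2$.
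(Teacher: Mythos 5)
Your proof is correct and follows essentially the same route as the paper's: the same block decomposition of $(\varphi_n|J_{\varphi_n})$ and bottom-row expansion yielding the recursion $\I_n(\theta)=(\cos\theta_n)^{n-1}\I_{n-1}(\theta')$, the same $A^TA$ computation for $(\mr{v})$, and the same hemisphere-volume argument for $(\mr{vi})$; your only real deviation is proving $(\mr{ii})$ by differentiating $\varphi_n^T\varphi_n=1$ rather than by induction, which is a harmless (indeed slightly cleaner) shortcut. One trivial imprecision in $(\mr{iv})$: on $I_n^+$ only the last angle $\theta_n$ lies in $(0,\pi/2)$, while $\theta_2,\ldots,\theta_{n-1}$ range over $(-\pi/2,\pi/2)$; the cosines are still positive there, so the conclusion is unaffected.
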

\begin{proof}
Since $J_{\Phi_{n+1}}(\rho,\theta)=(\varphi_n(\theta)|\rho J_{\varphi_n}(\theta))$, point $(\mr{i})$ is evident.

Let us prove points $(\mr{ii})$ and $(\mr{iii})$ by induction on $n \geq 1$. If $n=1$, then $(\mr{ii})$ and $(\mr{iii})$ are immediate to verify. Let $n \geq 2$ and let $\theta=(\theta^{\pr},\theta_n) \in I_{n-1} \times (0,\pi/2)=I_n^+$. Define $c_n:=\cos(\theta_n)$, $s_n:=\sin(\theta_n)$, $\varphi_{n-1}^{\pr}:=\varphi_{n-1}(\theta^{\pr})$ and $J_{n-1}^{\pr}:=J_{\varphi_{n-1}}(\theta^{\pr})$. By definition, we have that $\varphi_n(\theta)=(c_n\varphi_{n-1}^{\pr},s_n)^T$. In this way, it holds:
\[
J_{\varphi_n}(\theta)=
\left(
\begin{array}{c|c}
c_nJ_{n-1}^{\pr} & -s_n\varphi_{n-1}^{\pr} \\
\hline
0 & c_n
\end{array}
\right)
\]
and hence
\[
(\varphi_n(\theta))^TJ_{\varphi_n}(\theta)=
\big(
c_n^2(\varphi_{n-1}^{\pr})^TJ_{n-1}^{\pr} \big| -c_ns_n((\varphi_{n-1}^{\pr})^T\varphi_{n-1}^{\pr}-1)
\big).
\]
By induction, we know that $(\varphi_{n-1}^{\pr})^TJ_{n-1}^{\pr}=0$. On the other hand, $\varphi_{n-1}^{\pr}$ belongs to $S^{n-1}$ and hence $(\varphi_{n-1}^{\pr})^T\varphi_{n-1}^{\pr}=1$. It follows that $(\varphi_n(\theta))^TJ_{\varphi_n}(\theta)=0$, as desired. This prove $(\mr{ii})$. Observe that
\begin{align*}
\I_n(\theta)&=
\det\big(J_{\Phi_{n+1}}(1,\theta)\big)=
\det \left(
\begin{array}{c|c|c}
c_n\varphi_{n-1}^{\pr} & c_nJ_{n-1}^{\pr} & -s_n\varphi_{n-1}^{\pr} \\
\hline
s_n & 0 & c_n
\end{array}
\right)= \\
&=
c_n^{n-1}
\det \left(
\begin{array}{c|c|c}
c_n\varphi_{n-1}^{\pr} & J_{n-1}^{\pr} & -s_n\varphi_{n-1}^{\pr} \\
\hline
s_n & 0 & c_n
\end{array}
\right).
\end{align*}
The last determinant can be expanded \wrt the last row, to obtain
\begin{align*}
\I_n(\theta)&=c_n^{n-1}\left((-1)^ns_n
\det\left(\begin{array}{c|c}
 J_{n-1}^{\pr} & -s_n\varphi_{n-1}^{\pr} 
\end{array}\right)+c_n
\det\left(\begin{array}{c|c}
c_n\varphi_{n-1}^{\pr} &  J_{n-1}^{\pr} 
\end{array}\right)
\right)=\\
&=c_n^{n-1}\left(s_n^2
\det\left(\begin{array}{c|c}
\varphi_{n-1}^{\pr} &  J_{n-1}^{\pr} 
\end{array}\right)+c_n^2
\det\left(\begin{array}{c|c}
\varphi_{n-1}^{\pr} &  J_{n-1}^{\pr} 
\end{array}\right)
\right)=c_n^{n-1}\I_{n-1}(\theta')\;.
\end{align*}
Point $(\mr{iii})$ follows by induction. Point $(\mr{iv})$ is an immediate consequence of $(\mr{iii})$ and the fact that the cosine is positive on $(-\pi/2,\pi/2)$.

It remains to prove $(\mr{v})$ and $(\mr{vi})$. Fix an integer $n \geq 1$ and $\theta \in I_n^+$. Bearing in mind $(\mr{ii})$ and the equality $(\varphi_n(\theta))^T\varphi_n(\theta)=1$, it follows that
\[
(J_{\Phi_{n+1}}(1,\theta))^TJ_{\Phi_{n+1}}(1,\theta)=
\left(
\begin{array}{c|c}
1 & 0 \\
\hline
0 & (J_{\varphi_n}(\theta))^TJ_{\varphi_n}(\theta)
\end{array}
\right)
\]
and hence
\[
\I_n(\theta)^2=\det\big((J_{\Phi_{n+1}}(1,\theta))^TJ_{\Phi_{n+1}}(1,\theta)\big)=\det\big((J_{\varphi_n}(\theta))^TJ_{\varphi_n}(\theta)\big)\;.
\]
By combining this fact with $(\mr{iv})$, we infer point $(\mr{v})$. In particular, if $d\xi_n$ is the standard volume form on $S^n$, then $(\varphi_n)^{\ast}(d\xi_n)=\I_n(\theta)d\theta$ 
and hence $(\mr{vi})$ holds:
\[
\int_{I_n^+}\I_n(\theta) \, d\theta=\frac{1}{2}\int_{S^n}d\xi_n=\frac{\eta_n}{2}\;.
\]
The proof is complete.
\end{proof}


\textit{In what follows, we will use the notations fixed in the preceding section}.

\medskip
{\noindent\bf Proof of Lemma~\ref{lem:L^1}:\ }
Identify $\R^{m-1}$ with the vector subspace $\{0\} \times \R^{m-1}$ of $M \simeq \R^m=\R \times \R^{m-1}$ and $S$ with the sphere $S^{m-2}$ in $\R^{m-1}$.

If $m=2$, then Lemma~\ref{lem:L^1} is evident.
Suppose $m \geq 3$ and fix $J \in S^{m-2}$. We will prove the following two inequalities
\begin{equation} \label{eq:L^1-surface}
\int_{\partial\OO_D(S)}\|\ck(x,w)\|d\sigma_w<+\infty
\end{equation}
and
\begin{equation} \label{eq:L^1-volume}
\int_{\OO_D(S)}\|\ck(x,w)\|dw<+\infty\;.
\end{equation}
Evidently, points (\ref{eq:L^1-surface}) and (\ref{eq:L^1-volume}) are equivalent to Lemma~\ref{lem:L^1}.
We organize the remainder of the proof into two steps. In the first, we prove (\ref{eq:L^1-surface}). The second is devoted to the proof of (\ref{eq:L^1-volume}).

\textit{Step I.} 
For simplicity, we assume that $\partial D_J$ is connected. If this is not true, it suffices to consider each connected component of $\partial D_J$, suitably oriented.

Let $u=\alpha_0+\beta_0i \in \partial D$, let $v:=\alpha_0+\beta_0J \in \partial D_J$ 
and let $a,b:(0,1) \lra \R$ be $\mscr{C}^1$-functions such 
that the map $(0,1) \ni t \mapsto a(t)+b(t)J \in \C_J \setminus \{v\}$ is a $\mscr{C}^1$-embedding, which parametrizes $\partial D_J \setminus \{v\}$. Observe that $b(t)$ has at most two zeros in $(0,1)$. Define the $\mscr{C}^1$-map $\Psi:(0,1) \times I_{m-2}^+ \lra \R^m=\R \times \R^{m-1}$ and the dense open subset $\Xi$ of $\partial\OO_D(S)$ by setting
\[
\Psi(t,\theta):=(a(t),b(t)\varphi_{m-2}(\theta))^T
\]
and
\[
\Xi:=\big\{\alpha+\beta K \in \Q_A \, \big| \, \alpha,\beta \in \R, \, \alpha+\beta i \in \partial D \setminus \{u\},  \, K \in S^{m-2}_+ \setminus (N \times \R^{m-3})\big\}.
\]
It is immediate to see that the image of $\Psi$ coincides with $\Xi$ and the restriction $\psi:(0,1) \times I_{m-2}^+ \lra \Xi$ of $\Psi$ onto its image $\Xi$ is a $\mscr{C}^1$-diffeomorphism.

Let $(t,\theta) \in (0,1) \times I_{m-2}^+$ and let $J_{\Psi}(t,\theta)$ be the jacobian matrix of $\Psi$ at $(t,\theta)$. It holds:
\[
J_{\Psi}(t,\theta)=
\left(
\begin{array}{c|c}
a^{\pr}(t) & 0 \\
\hline
b^{\pr}(t)\varphi_{m-2}(\theta) & b(t)J_{\varphi_{m-2}}(\theta)
\end{array}
\right),
\]
where $a^{\pr}$ and $b^{\pr}$ denote the derivatives of $a$ and of $b$, respectively. By points $(\mr{ii})$, $(\mr{iv})$ and $(\mr{v})$ of Lemma \ref{lem:polar}, we obtain:
\[
(J_{\Psi}(t,\theta))^TJ_{\Psi}(t,\theta)=
\left(
\begin{array}{c|c}
(a^{\pr}(t))^2+(b^{\pr}(t))^2 & 0 \\
\hline
0 & (b(t))^2(J_{\varphi_{m-2}}(\theta))^TJ_{\varphi_{m-2}}(\theta)
\end{array}
\right)
\]
and hence
\[
\sqrt{\det\big((J_{\Psi}(t,\theta))^TJ_{\Psi}(t,\theta)\big)}=|b(t)|^{m-2}\I_{m-2}(\theta)\sqrt{(a^{\pr}(t))^2+(b^{\pr}(t))^2}\;.
\]
By combining point $(\mr{iv})$ of Lemma \ref{lem:polar} with the latter equality, we infer that
\begin{equation} \label{eq:psi*}
\psi^{\ast}(d\sigma_w)=|b(t)|^{m-2}\I_{m-2}(\theta)\sqrt{(a^{\pr}(t))^2+(b^{\pr}(t))^2} \, dt \, d\theta.
\end{equation}
Bearing in mind (\ref{eq:psi*}) and performing the change of variable $w=\psi(t,\theta)$, we obtain: 
\begin{align*}
&\int_{\partial\OO_D(S)}\|\ck(x,w)\|d\sigma_w=\frac{2}{\eta_{m-2}}\int_{(0,1) \times I_{m-2}^+}\frac{\|C(x,w)\|}{|b(t)|^{m-2}} \, \psi^{\ast}(d\sigma_w)=\\
&=\frac{2}{\eta_{m-2}}\int_{(0,1) \times I_{m-2}^+}\|C(x,w)\|\I_{m-2}(\theta)\sqrt{(a^{\pr}(t))^2+(b^{\pr}(t))^2} \, dt \, d\theta.
\end{align*}
Since the integrand in the last integral is continuous, positive and bounded on $(0,1) \times I_{m-2}^+$, inequality (\ref{eq:L^1-surface}) holds.

\textit{Step II.} Let us prove (\ref{eq:L^1-volume}). The proof is similar to the one of the preceding step, but slightly simpler. Let $(r,s)$ be the coordinates of $\C \simeq \R^2$. Define the $\mscr{C}^1$-map $\Gamma:(D \setminus \R) \times I_{m-2}^+ \lra \R^m=\R \times \R^{m-1}$ and the dense open subset $\Upsilon$ of $\OO_D(S) \setminus \R$ by setting
\[
\Gamma(r,s,\theta):=(r,\Phi_{m-1}(s,\theta))^T
\]
and
\[
\Upsilon:=\big\{r+sK \in \Q_A \, \big| \, r,s \in \R, \, r+si \in D \setminus \R, \, K \in S^{m-2}_+ \setminus (N \times \R^{m-3})\big\}.
\]
It is easy to verify that the image of $\Gamma$ is equal to $\Upsilon$ and the restriction $\gamma:(D \setminus \R) \times I_{m-2}^+ \lra \Upsilon$ of $\Gamma$ onto its image $\Upsilon$ is a $\mscr{C}^1$-diffeomorphism.

Given $(r,s,\theta) \in (D \setminus \R) \times I_{m-2}^+$, it is immediate to see that the determinant $\det\big(J_{\Gamma}(r,s,\theta)\big)$ of the jacobian matrix of $\Gamma$ at $(r,s,\theta)$ is equal to $\det\big(J_{\Phi_{m-1}}(s,\theta)\big)$ and hence, by point $(\mr{i})$ of Lemma \ref{lem:polar}, we have:
\begin{equation} \label{eq:gamma*}
\left|\det\big(J_{\Gamma}(r,s,\theta)\big)\right|=|s|^{m-2}\I_{m-2}(\theta)\;.
\end{equation}
Using the change of variable $w=\Gamma(r,s,\theta)$ and (\ref{eq:gamma*}), we obtain:
\begin{align*}
\int_{\OO_D(S)}\|\ck(x,w)\|dw &=\frac{2}{\eta_{m-2}}\int_{(D \setminus \R) \times I_{m-2}^+}\frac{\|C(x,w)\|}{|s|^{m-2}} \, |s|^{m-2}\I_{m-2}(\theta) \, dr \, ds \, d\theta=\\
&=\frac{2}{\eta_{m-2}}\int_{D \times I_{m-2}^+}\|C(x,w)\|\I_{m-2}(\theta) \, dr \, ds \, d\theta\;.
\end{align*}
This equality implies immediately (\ref{eq:L^1-volume}).

\medskip
{\noindent\bf Proof of Theorem~\ref{thm:main}:\ }
During this proof, we will use the maps $\psi$ and $\Gamma$ defined in the preceding one. If $m=2$, then, as we have just said, Theorem~\ref{thm:main} is equivalent to Theorem~27 of \cite{GhPe_AIM}. Suppose $m \geq 3$. Let us prove that, fixed any $J \in S$, the following equality holds:
\begin{align} \label{eq:volume}\notag
\int_{\partial\OO_D(S)}\ck(x,w) \, \nn(w) f(w) \, d\sigma_w-2\int_{\OO_D(S)}\ck(x,w) \, \dd{f}{x^c}(w) \, dw
=\\=\int_{\partial D_J}C(x,y)J^{-1} dy \, f(y)-\int_{D_J}C(x,y)J^{-1} dy^c \wedge dy \, \dd{f}{x^c}(y)\;.
\end{align}
Bearing in mind Theorem~27 of \cite{GhPe_AIM}, equation (\ref{eq:volume}) is equivalent to Theorem~\ref{thm:main}.
Observe that, if $(t,\theta) \in (0,1) \times I_{m-2}^+$ and $w=\psi(t,\theta)$, then
\begin{equation} \label{eq:outer-norm}
\nn(w)=\frac{b^{\pr}(t)-a^{\pr}(t)J_\theta}{\sqrt{(a^{\pr}(t))^2+(b^{\pr}(t))^2}}\;,
\end{equation}
where $J_\theta:=\varphi_{m-2}(\theta)\in S$.
Making use of (\ref{eq:psi*}), (\ref{eq:outer-norm}) and of the change of variable $w=\psi(t,\theta)$, we obtain:
\begin{align}\label{eq:surface-int}\notag
&\frac{\eta_{m-2}}{2}\int_{\partial\OO_D(S)}\ck(x,w) \, \nn(w) f(w) \, d\sigma_w=\\\notag
&=\int_{(0,1) \times I_{m-2}^+}C(x,w)(b^{\pr}(t)-a^{\pr}(t)J_\theta)\I_{m-2}(\theta) \, dt \, d\theta \, f(w)=\\
\notag
&=\int_{I_{m-2}^+}\I_{m-2}(\theta) \, d\theta \int_0^1C(x,w)(b^{\pr}(t)-a^{\pr}(t)J_\theta) \, dt \, f(w)=\\
&=\int_{I_{m-2}^+}\I_{m-2}(\theta) \, d\theta \int_0^1C(x,w)\,J_\theta^{-1}d(a(t)+b(t)J_\theta) \, f(w)\;,
\end{align}
since $J_\theta^{-1}=-J_\theta$.
Using the change of variable $w=\Gamma(r,s,\theta)=(r,s\,J_\theta)$ and (\ref{eq:gamma*}) again, we also have:
\begin{align} \label{eq:volume-int}\notag
&\eta_{m-2}\int_{\OO_D(S)}\ck(x,w) \, \dd{f}{x^c}(w) \, dw=
\int_{D \times I_{m-2}^+}C(x,w)\I_{m-2}(\theta) \, 2 \, dr \, ds \, d\theta \, \dd{f}{x^c}(w)=\\
&=\int_{I_{m-2}^+}\I_{m-2}(\theta) \, d\theta \int_{D_{J_\theta}}C(x,y) \, J_\theta^{-1}dy^c \wedge dy \, \dd{f}{x^c}(y)\;,
\end{align}
where $2 \, dr \, ds=J_\theta^{-1}dy^c \wedge dy$ 
if $y=r+sJ_\theta \in D_{J_\theta}$.

From \eqref{eq:surface-int} and \eqref{eq:volume-int}, we get 
\begin{align*}
\frac{\eta_{m-2}}{2}\int_{\partial\OO_D(S)}\ck(x,w) \, \nn(w) f(w) \, d\sigma_w-\eta_{m-2}\int_{\OO_D(S)}\ck(x,w) \, \dd{f}{x^c}(w) \, dw=\\=
\int_{I_{m-2}^+}\I_{m-2}(\theta) \, d\theta \left(\int_0^1C(x,w)J_\theta^{-1}d(a(t)+b(t)J_\theta) \, f(w)-\right.\\
\left.-\int_{D_{J_\theta}}C(x,y) \, J_\theta^{-1}dy^c \wedge dy\, \dd{f}{x^c}(y)\right)\;.
\end{align*}
The Cauchy formula proved in Theorem~27 of \cite{GhPe_AIM} gives that
\begin{align*}
&\int_0^1C(x,w)J_\theta^{-1}d(a(t)+b(t)J_\theta) \, f(w)-
\int_{D_{J_\theta}}C(x,y) \, J_\theta^{-1}dy^c \wedge dy \, \dd{f}{x^c}(y)=\\
&=\int_{\partial D_J}C(x,y)J^{-1} dy \, f(y)-\int_{D_J}C(x,y)J^{-1} dy^c \wedge dy \, \dd{f}{x^c}(y)=2\pi f(x)
\end{align*}
for each $\theta \in I_{m-2}^+$. Therefore, thanks to point $(\mr{vi})$ of Lemma \ref{lem:polar}, we have: 
\begin{align*}
&\frac{\eta_{m-2}}{2}\int_{\partial\OO_D(S)}\ck(x,w) \, \nn(w) f(w) \, d\sigma_w-\eta_{m-2}\int_{\OO_D(S)}\ck(x,w) \, \dd{f}{x^c}(w) \, dw=\\
&=\frac{\eta_{m-2}}{2}\left(\int_{\partial D_J}C(x,y)J^{-1} dy \, f(y)-\int_{D_J}C(x,y)J^{-1} dy^c \wedge dy \, \dd{f}{x^c}(y)\right)=\\ &=\int_{I_{m-2}^+}\I_{m-2}(\theta) \, d\theta \left(2\pi f(x)\right)=\pi\,\eta_{m-2}f(x)\;.
\end{align*}
Equality (\ref{eq:volume}) is proved and the proof is complete.

\medskip
{\noindent\bf Proof of Theorem~\ref{thm:jump}:\ }
Let $f \in \mc{S}^1(\partial\OO_D(S),A)$.
For each $\theta\in I^+_{m-2}$, let $J_\theta=\varphi_{m-2}(\theta)\in S$ be as in the preceding proof. Denote by $F_\theta^+$ and $F_\theta^-$ the Cauchy-type integrals defined on $D_{J_\theta}$ and $\C_{J_\theta}\setminus\overline{D}_{J_\theta}$ by
\be\label{Ftheta}
F_\theta^\pm(x):=\frac{1}{2\pi}\int_{\partial{D}_{J_\theta}}C(x,w) J_\theta^{-1} \, dw \,f(w).
\ee
Since the restriction $f_{|\partial D_{J_\theta}}$ is of class $\mscr{C}^1$, $F_\theta^+$ extends continuously to $\overline{D}_{J_\theta}$ and  $F_\theta^-$ extends as a continuous function on  $\C_{J_\theta}\setminus{D}_{J_\theta}$. Moreover, the classical Sokhotski\u\i-Plemelj formula (see \cite[\S2]{Kytmanov}) holds:
\be\label{jump}
F_\theta^+(x)-F_\theta^-(x)=f(x)\quad\text{for each $x\in\partial{D}_{J_\theta}$.}
\ee
The functions $F_\theta^+$ and $F_\theta^-$ are slice regular on $\OO_D$ and $\Q_A \setminus \overline{\OO}_D$, respectively. The continuity of their restrictions to $\C_{J_\theta}$ up to the boundary implies the continuity of the inducing stem functions (see the proof of Proposition 5 of \cite{GhPe_AIM}). In view of Proposition 7(1) of \cite{GhPe_AIM}, also the functions $F_\theta^\pm$ are continuous up to the boundary. Given a point $\hat{x}=\alpha+\beta I\in\partial{\OO_D(S)}$, let $x'=\alpha+\beta J_\theta$ and  $x''=\alpha-\beta J_\theta$. From \eqref{jump}, we get: $f(x')=F_\theta^+(x')-F_\theta^-(x')$ and $f(x'')=F_\theta^+(x'')-F_\theta^-(x'')$. The representation formula (see \cite[Prop.~6]{GhPe_AIM}) applied to $f$ and to $F_\theta^\pm$ gives
\be\label{jump1}
f(\hat{x})=\frac12(f(x')+f(x''))-\frac{IJ_\theta}2(f(x')-f(x''))=F_\theta^+(\hat{x})-F_\theta^-(\hat{x})\;.
\ee
From \eqref{eq:surface-int} and \eqref{Ftheta}, we get:
\begin{align}\label{int_Ftheta}
F_S^\pm(x)&=\frac{1}{2\pi}\int_{\partial\OO_D(S)}\ck(x,w) \, \nn(w) f(w) \, d\sigma_w=\notag\\
&=\frac1{\pi\eta_{m-2}}\int_{I_{m-2}^+}\I_{m-2}(\theta) \, d\theta \int_0^1C(x,w)\,J_\theta^{-1}d(a(t)+b(t)J_\theta) \, f(w)
=\notag\\
&=\frac2{\eta_{m-2}}\int_{I_{m-2}^+}\I_{m-2}(\theta) \, d\theta\, F_\theta^\pm(x)\;.
\end{align}
Let $U$ be a compact  neighborhood of $\partial D$, invariant w.r.t.\ complex conjugation. There exists a positive constant $K$ such that $\|F_\theta^+(x)\|\le K$ for each $x\in \overline{D}_{J_\theta}$ and $\|F_\theta^-(x)\|\le K$ for each $x\in (\C_{J_\theta}\setminus D_{J_\theta})\cap\OO_U$. The representation formula gives $\|F_\theta^+(x)\|\le 2K$ for each $x\in\overline{\OO}_D$ and $\|F_\theta^-(x)\|\le 2K$ for each $x\in (\Q_A\setminus\OO_D)\cap\OO_U$.
Then Lebesgue's dominated convergence theorem ensures that  $F_S^\pm$ extend continuously up to the boundary $\partial{\OO_D}$. Moreover, from \eqref{jump1} and \eqref{int_Ftheta}, we get the jump $F_S^+(\hat{x})-F_S^-(\hat{x})$ at $\hat{x} \in \partial\OO_D(S)$:
\begin{align}
F_S^+(\hat{x})-F_S^-(\hat{x})&=\frac2{\eta_{m-2}}\int_{I_{m-2}^+}\I_{m-2}(\theta) \, d\theta\, (F_\theta^+(\hat{x})-F_\theta^-(\hat{x}))=\notag
\\
&=\frac2{\eta_{m-2}}\int_{I_{m-2}^+}\I_{m-2}(\theta) \, d\theta\, f(\hat{x})=f(\hat{x})\;.
\end{align}





\end{document}